\documentclass[12pt]{article}
\usepackage{amsthm}
\usepackage{amssymb}
\usepackage{amsmath}

\newtheorem{thm}{Theorem}
\newtheorem{prp}{Proposition}
\newtheorem{defi}{Definition}

\newtheorem{rem}{Remark}

\newtheorem{lem}{Lemma}

\newcommand{\idem}{\operatorname{idem}}

%%%%%%    TEXT START    %%%%%%

\begin{document}
\title{The Stokes phenomenon for the $q$-difference equation satisfied by the basic hypergeometric series ${}_3\varphi_1(a_1,a_2,a_3;b_1;q,x)$}
\author{Takeshi MORITA\thanks{Graduate School of Information Science and Technology, Osaka University, 
1-1  Machikaneyama-machi, Toyonaka, 560-0043, Japan.} }
\date{}
\maketitle
%%%%%%%%%%%%%%%%%%%%%%abstract%%%%%%%%%%%%%%%%%%%%%%%%%%%%%%%%%%%%%%%%%%
\begin{abstract}
We show the connection formula for the basic hypergeometric series ${}_3\varphi_1(a_1,a_2,a_3;b_1;q,x)$ between around the origin and infinity by the using of the $q$-Borel-Laplace transformations. We also show the limit $q\to 1-0$ of the new connection formula.
\end{abstract}

\section{Introduction}
In this paper, we show the connection formula for the \textit{divergent} basic hypergeometric series
\begin{equation}
{}_3\varphi_1(a_1,a_2,a_3;b_1;q,x)=\sum_{n\ge 0}\frac{(a_1,a_2,a_3;q)_n}{(b_1;q)_n(q;q)_n}\left\{(-1)^nq^{\frac{n(n-1)}{2}}\right\}^{-1}x^n\label{their}
\end{equation}
between around the origin and around infinity by the using of the $q$-Borel-Laplace resummation methods. Here, the function $(a;q)_n$ is the $q$-shifted factorial (see section 2 and \cite{GR} for more details of the $q$-shifted factorials and the basic hypergeometric series ${}_r\varphi_s(a_1,\dots ,a_r;b_1,\dots ,b_s;q,x)$):
\[(a;q)_n:=
\begin{cases}
1, &n=0, \\
(1-a)(1-aq)\dots (1-aq^{n-1}), &n\ge 1.
%[(1-aq^{-1})(1-aq^{-2})\dots (1-aq^n)]^{-1}, &n\le -1
\end{cases}
\]

The series \eqref{their} satisfy the third order linear $q$-difference equation
\begin{align}
\left(a_1a_2a_3x-\frac{b_1}{q^2}\right)u(q^3x)&-\left\{(a_1a_2+a_2a_3+a_3a_1)x-\left(\frac{b_1}{q^2}+\frac{1}{q}\right)\right\}u(q^2x)\notag\\
&+\left\{(a_1+a_2+a_3)x-\frac{1}{q}\right\}u(qx)-xu(x)=0.\label{third} 
\end{align} 
Equation \eqref{third} also has a fundamental system of solutions around infinity:
\begin{align}
v_1(x)&:=x^{-\alpha_1}{}_3\varphi_2\left(a_1,\frac{a_1q}{b_1},0;\frac{a_1q}{a_2},\frac{a_1q}{a_3};q,\frac{qb_1}{a_1a_2a_3x}\right)\label{si1}\\
v_2(x)&:=x^{-\alpha_2}{}_3\varphi_2\left(a_2,\frac{a_2q}{b_1},0;\frac{a_2q}{a_1},\frac{a_2q}{a_3};q,\frac{qb_1}{a_1a_2a_3x}\right)\label{si2}\\
v_3(x)&:=x^{-\alpha_3}{}_3\varphi_2\left(a_3,\frac{a_3q}{b_1},0;\frac{a_3q}{a_2},\frac{a_3q}{a_1};q,\frac{qb_1}{a_1a_2a_3x}\right)\label{si3}
\end{align}
where $a_j=q^{\alpha_j}$, $j=1,2$ and $3$. In section 3, we show the connection formula between \eqref{si1}, 
\eqref{si2} \eqref{si3} and \eqref{their}.

\bigskip

We review the connection problems on the linear $q$-difference equations. Connection problems on the linear $q$-difference equations with regular singular points were studied by G.~D.~Birkhoff \cite{Birkhoff}. Connection formulae for the second order linear $q$-difference equations are given by the matrix form
\[\begin{pmatrix}
u_1(x)\\
u_2(x)
\end{pmatrix}
=
\begin{pmatrix}
C_{11}(x)&C_{12}(x)\\
C_{21}(x)&C_{22}(x)
\end{pmatrix}
\begin{pmatrix}
v_1(x)\\
v_2(x)
\end{pmatrix}.\]
The pair ($u_1(x), u_2(x)$) is a fundamental system of solutions around the origin and the pair $(v_1(x), v_2(x))$ is a fundamental system of solutions around infinity. The connection coefficients $C_{jk}(x)$ $(1\le j,k\le 2)$ are given by $q$-periodic and unique valued functions
\[\sigma_qC_{jk}(x)=C_{jk}(x),\quad C_{jk}(e^{2\pi i}x)=C_{jk}(x),\]
namely, the \textit{elliptic functions}.

The first example of the connection formula was given by G.~N.~Watson \cite{W} in 1910. Watson gave the connection formula for Heine's basic hypergeometric series 
\[{}_2\varphi_1(a,b;c;q,x):=\sum_{n\ge 0}\frac{(a,b;q)_n}{(c;q)_n(q;q)_n}x^n\]
around the origin and around the infinity \cite[page 117]{GR}. 
Heine's ${}_2\varphi_1(a,b;c;q,x)$ satisfies the $q$-difference equation 
\begin{equation}
\left[(c-abqx)\sigma_q^2-\left\{(c+q)-(a+b)qx\right\}\sigma_q+q(1-x)\right]u(x)=0.\label{heineeq}
\end{equation}
The equation \eqref{heineeq} also has a fundamental system of solutions around the infinity:
\[y_{\infty}^{(a,b)}(x)=x^{-\alpha}{}_2\varphi_1\left(a,\frac{aq}{c};\frac{aq}{b};q,\frac{cq}{abx}\right)\]
and
\[y_{\infty}^{(b,a)}(x)=x^{-\beta}{}_2\varphi_1\left(b,\frac{bq}{c};\frac{bq}{a};q,\frac{cq}{abx}\right),\]
provided that $a=q^\alpha$ and $b=q^{\beta}$. 
Watson's connection formula for ${}_2\varphi_1(a,b;c;q,x)$ is given by
\begin{align}\label{wato}
{}_2 \varphi_1\left(a,b;c;q;x \right)&= 
\frac{(b,c/a;q)_\infty \theta (-ax)_\infty }{(c, b/a;q)_\infty \theta (-x)_\infty }\frac{\theta (x)}{\theta (ax)} 
y_{\infty}^{(a,b)}(x) \nonumber \\
&+\frac{(a,c/b;q)_\infty \theta (-bx)_\infty }{(c, a/b;q)_\infty \theta (-x)_\infty } \frac{\theta (x)}{\theta (bx)} 
y_{\infty}^{(b,a)}(x).\notag
\end{align}
Here, the notation $\theta (x)$ is the theta function of Jacobi(see section two for more details). We remark that the connection coefficients are given by the $q$-elliptic functions.

\bigskip
But connection formulae for $q$-difference equations with irregular singular points had not known for a long time. We remark that A.~Duval and C.~Mitschi gave connection matrices for degenerated \textit{differential} equations \cite{Du-Mi}. 
The irregularity of $q$-difference equations are studied by the using of the Newton polygons by J.-P.~Ramis, J.~Sauloy and C.~Zhang \cite{RSZ}. C.~Zhang gave connection formulae for some confluent type basic hypergeometric series \cite{Z0,Z1,Z2} where he uses the $q$-Borel-Laplace transformations. In \cite{M0,M1}, the author gave the connection formula for the Hahn-Exton $q$-Bessel function and the $q$-confluent type function by the $q$-Borel-Laplace transformations. These resummation methods are powerful tools for connection problems on linear $q$-difference equations with irregular singular points.

\bigskip
\noindent
\begin{defi}
We assume that $f(x)$ is a formal power series $f(x)=\sum_{n\in\mathbb{Z}}a_nx^n$, $a_0=1$.
\begin{enumerate}
\item The $q$-Borel transformation is
\[\left(\mathcal{B}_q^+f\right)(\xi ):=\sum_{n\in\mathbb{Z}}a_nq^{\frac{n(n-1)}{2}}\xi^n\left(=:\psi (\xi )\right).\]
\item For any analytic function $\psi (\xi )$ around $\xi =0$, the $q$-Laplace transformation is
\[\left(\mathcal{L}_{q, \lambda}^+\psi\right)(x):=
\frac{1}{1-q}\int_0^{\lambda\infty}\frac{\varphi (\xi )}{\theta_q\left(\frac{\xi}{x}\right)}\frac{d_q\xi}{\xi}=\sum_{n\in\mathbb{Z}}\frac{\varphi (\lambda q^n)}{\theta_q\left(\frac{\lambda q^n}{x}\right)}.\]
Here, this transformation is given by Jackson's $q$-integral \cite[page 23]{GR}. 
\end{enumerate}
\end{defi}
The definition is a special case of one of the $q$-Laplace transformations in \cite{ZandD, Z0}. The $q$-Borel transformation is the formal inverse of the $q$-Laplace transformation as follows:
\begin{lem}[Zhang, \cite{Z0}]
For any entire function $f(x)$, we have
\[\mathcal{L}_{q,\lambda}^+\circ\mathcal{B}_q^+f=f.\]
\end{lem}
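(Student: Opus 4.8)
The plan is to reduce the identity to the case of a single monomial by linearity, and then to settle the monomial case directly using the quasi-periodicity of the theta function $\theta_q$. Write $f(x)=\sum_{n\ge 0}a_nx^n$. Since $f$ is entire and $|q|<1$, the numbers $a_nq^{n(n-1)/2}$ decay super-geometrically, so $\psi:=\mathcal{B}_q^+f$ is again entire, and by the definition of the $q$-Laplace transformation
\[
(\mathcal{L}_{q,\lambda}^+\psi)(x)=\sum_{m\in\mathbb{Z}}\frac{\psi(\lambda q^m)}{\theta_q(\lambda q^m/x)}=\sum_{m\in\mathbb{Z}}\frac{1}{\theta_q(\lambda q^m/x)}\sum_{n\ge 0}a_nq^{n(n-1)/2}(\lambda q^m)^n .
\]
First I would justify interchanging the two summations. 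Iterating the functional equation $\theta_q(qy)=y^{-1}\theta_q(y)$ yields $\theta_q(q^my)=q^{-m(m-1)/2}y^{-m}\theta_q(y)$ for all $m\in\mathbb{Z}$, hence
\[
\frac{1}{\theta_q(\lambda q^m/x)}=\frac{q^{m(m-1)/2}(\lambda/x)^m}{\theta_q(\lambda/x)} .
\]
Taking absolute values, the double series becomes $\theta_q(\lambda/x)^{-1}\sum_{n\ge 0}|a_n|\,|q|^{n(n-1)/2}|\lambda|^nS_n(x)$ with $S_n(x):=\sum_{m\in\mathbb{Z}}|q|^{mn+m(m-1)/2}|\lambda/x|^m$; completing the square in the $m$-exponent gives $|q|^{n(n-1)/2}|\lambda|^nS_n(x)\asymp|x|^n$, so the double series is dominated by a constant multiple of $\sum_{n\ge 0}|a_n|\,|x|^n$, which converges since $f$ is entire. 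Thus, for $x$ off the $q$-spiral where the summands of $\mathcal{L}_{q,\lambda}^+$ have poles, Fubini applies and it suffices to prove $\mathcal{L}_{q,\lambda}^+\mathcal{B}_q^+(x^n)=x^n$ for every $n\ge 0$.

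For the monomial $x^n$ one has $\mathcal{B}_q^+(x^n)=q^{n(n-1)/2}\xi^n$, so
\[
\mathcal{L}_{q,\lambda}^+\mathcal{B}_q^+(x^n)=q^{n(n-1)/2}\sum_{m\in\mathbb{Z}}\frac{(\lambda q^m)^n}{\theta_q(\lambda q^m/x)}=q^{n(n-1)/2}\lambda^n\sum_{m\in\mathbb{Z}}\frac{q^{mn}}{\theta_q(\lambda q^m/x)} ,
\]
and the lemma reduces to the theta identity $\sum_{m\in\mathbb{Z}}q^{mn}/\theta_q(\lambda q^m/x)=x^n/(q^{n(n-1)/2}\lambda^n)$. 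Inserting the formula for $1/\theta_q(\lambda q^m/x)$ found above, its left-hand side equals
\[
\frac{1}{\theta_q(\lambda/x)}\sum_{m\in\mathbb{Z}}q^{mn+m(m-1)/2}\left(\frac{\lambda}{x}\right)^m=\frac{\theta_q(q^n\lambda/x)}{\theta_q(\lambda/x)} ,
\]
the last step being merely the definition of $\theta_q$ as a bilateral series. Applying the functional equation once more, $\theta_q(q^n\lambda/x)=q^{-n(n-1)/2}(x/\lambda)^n\theta_q(\lambda/x)$; the factor $\theta_q(\lambda/x)$ cancels and one is left with $q^{-n(n-1)/2}(x/\lambda)^n=x^n/(q^{n(n-1)/2}\lambda^n)$, as required. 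Hence $\mathcal{L}_{q,\lambda}^+\mathcal{B}_q^+(x^n)=x^n$, and by the reduction above $\mathcal{L}_{q,\lambda}^+\circ\mathcal{B}_q^+f=f$.

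The only genuinely delicate point is the interchange of summations in the first step: this is where the $n^2$-order growth of $S_n(x)$ must be seen to be exactly compensated by the factor $q^{n(n-1)/2}$ produced by $\mathcal{B}_q^+$, and it is also the place where the entireness of $f$ (rather than its being a mere formal power series) is genuinely used. Everything after that is a mechanical application of the quasi-periodicity $\theta_q(q^my)=q^{-m(m-1)/2}y^{-m}\theta_q(y)$, which is itself nothing but an index shift in the defining series of $\theta_q$.
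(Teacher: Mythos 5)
Your proof is correct: the reduction to monomials is legitimately justified by the absolute-convergence estimate (the factor $q^{n(n-1)/2}$ from $\mathcal{B}_q^+$ exactly offsetting the growth of $\sum_{m\in\mathbb{Z}}|q|^{mn+m(m-1)/2}|\lambda/x|^m$), and the monomial identity follows from $\theta_q(q^m y)=q^{-m(m-1)/2}y^{-m}\theta_q(y)$, valid for $x\notin[-\lambda;q]$. The paper itself gives no proof, citing Zhang, and your term-by-term argument via the quasi-periodicity of $\theta_q$ is precisely the standard argument behind that citation, so there is nothing to add.
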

Thanks to these methods, some connection formulae for the second order $q$-difference equations were found. However, the connection formulae for more higher order linear $q$-difference equations have not known. In this paper, especially we apply the $q$-Borel-Laplace transformations to the divergent series \eqref{their} to study the connection problem on the third order $q$-difference equation. In the section 3, we show the following theorem:

\bigskip
\noindent
\textbf{Theorem.}\textit{ For any $x\in\mathbb{C}^*\setminus [-\lambda ;q]$, we have} 
\begin{align*}
&{}_3f_1(a_1,a_2,a_3;b_1;q;\lambda ,x):=
\left(\mathcal{L}_{q,\lambda}^+\circ\mathcal{B}_q^+ {}_3\varphi_1(a_1,a_2,a_3;b_1;q,x)\right)(x)\\
&=\frac{(a_2,a_3,b_1/a_1;q)_\infty}{(b_1,a_2/a_1,a_3/a_1;q)_\infty}\frac{\theta (a_1\lambda )}{\theta (\lambda )}\frac{\theta (a_1qx/\lambda )}{\theta (qx/\lambda )}\frac{\theta (x)}{\theta (a_1x)}v_1(x)\\
&+\frac{(a_1,a_3,b_1/a_2;q)_\infty}{(b_1,a_1/a_2,a_3/a_2;q)_\infty}\frac{\theta (a_2\lambda )}{\theta (\lambda )}\frac{\theta (a_2qx/\lambda )}{\theta (qx/\lambda )}\frac{\theta (x)}{\theta (a_2x)}v_2(x)\\
&+\frac{(a_2,a_1,b_1/a_3;q)_\infty}{(b_1,a_2/a_3,a_1/a_3;q)_\infty}\frac{\theta (a_3\lambda )}{\theta (\lambda )}\frac{\theta (a_3qx/\lambda )}{\theta (qx/\lambda )}\frac{\theta (x)}{\theta (a_3x)}v_3(x).
\end{align*}
\textit{Here, $\left(\mathcal{L}_{q,\lambda}^+\circ\mathcal{B}_q^+ {}_3\varphi_1(a_1,a_2,a_3;b_1;q,x)\right)(x)$ is the $q$-Borel-Laplace transform of the divergent series ${}_3\varphi_1(a_1,a_2,a_3;b_1;q,x)$.}

We remark that the connection coefficients(with the new parameter $\lambda$) are given by the $q$-elliptic functions.    These coefficients are also  the new example of the Stokes phenomenon \cite{ZandD} for the $q$-difference equation \eqref{third}.
 
\medskip
In the last section, we also give the limit $q\to 1-0$ of the new connection formula.

\section{Basic notations}
In this section, we review our notations. The $q$-shifted operator $\sigma_q$ is given by $\sigma_qf(x)=f(qx)$. For any fixed $\lambda\in\mathbb{C}^*\setminus q^{\mathbb{Z}}$, the set $[\lambda ;q]$-spiral is $[\lambda ;q]:=\lambda q^{\mathbb{Z}}=\{\lambda q^k;k\in\mathbb{Z}\}$. 
The function $(a;q)_n$ is the $q$-shifted factorial such that
\[(a;q)_n:=
\begin{cases}
1, &n=0, \\
(1-a)(1-aq)\dots (1-aq^{n-1}), &n\ge 1.
%[(1-aq^{-1})(1-aq^{-2})\dots (1-aq^n)]^{-1}, &n\le -1
\end{cases}
\]
moreover, $(a;q)_\infty :=\lim_{n\to \infty}(a;q)_n$ and 
\[(a_1,a_2,\dots ,a_m;q)_\infty:=(a_1;q)_\infty (a_2;q)_\infty \dots (a_m;q)_\infty.\]

\noindent
The basic hypergeometric series with the base $q$ \cite[page 4]{GR} is
\begin{align*}
{}_r\varphi_s(a_1,\dots ,a_r&;b_1,\dots ,b_s;q,x)\\
&:=\sum_{n\ge 0}\frac{(a_1,\dots ,a_r;q)_n}{(b_1,\dots ,b_s;q)_n(q;q)_n}\left\{(-1)^nq^{\frac{n(n-1)}{2}}\right\}^{1+s-r}x^n.
\end{align*}
The radius of convergence is $\infty , 1$ or $0$ according to whether $r-s<1, r-s=1$ or $r-s>1$. 

\noindent
The theta function of Jacobi is important in connection problems on linear $q$-difference equations. The theta function with the base $q$ is
\[\theta_q(x):=\sum_{n\in\mathbb{Z}}q^{\frac{n(n-1)}{2}}x^n,\qquad \forall x\in\mathbb{C}^*.\]
The theta function has the triple product identity
\begin{equation}
\theta_q(x)=\left(q,-x,-\frac{q}{x};q\right)_\infty .
\label{triple}
\end{equation}
The theta function satisfies the $q$-difference equation
$\theta_q(q^kx)=q^{-\frac{n(n-1)}{2}}x^{-k}\theta_q(x)$, $\forall k\in\mathbb{Z}$. The theta function also has the inversion formula $\theta_q\left(1/x\right)=\theta_q(x)/x$. We remark that $\theta (\lambda q^k/x)=0$ if and only if $x\in [-\lambda ;q]$. The function $\theta (x)/\theta (q^\alpha x)$, $\forall\alpha\not\in \mathbb{Z}$ satisfies a $q$-difference equation
\[u(qx)=q^{\alpha}u(x),\]
which is also satisfied by the function $u(x)=x^{\alpha}$. 

\section{The connection formula}
In this section, we give the new connection formula for the basic hypergeometric series ${}_3\varphi_1(a_1,a_2,a_3;b_1;q,x)$. In section \ref{sec3.1}, we review the connection formula of non-degenerated series ${}_3\varphi_2(a_1,a_2,a_3;b_1,b_2;q,x)$. 

\subsection{The non-degenerated case}\label{sec3.1}
The non-degenerated convergent series 
\begin{equation}
{}_3\varphi_2(a_1,a_2,a_3;b_1,b_2;q,x):=\sum_{n\ge 0}\frac{(a_1,a_2,a_3;q)_n}{(b_1,b_2;q)_n(q;q)_n}x^n
\label{solo1}
\end{equation}
satisfies the third order $q$-difference equation
\begin{align}
&\left[\left(a_1a_2a_3x-\frac{b_1b_2}{q^2}\right)\sigma_q^3-\left\{(a_1a_2+a_2a_3+a_3a_1)x-\left(\frac{b_1b_2}{q^2}+\frac{b_2}{q}+\frac{b_1}{q}\right)\right\}\sigma_q^2\right.\label{ND3}\\
&\left. \left\{(a_1+a_2+a_3)x-\left(\frac{b_1}{q}+\frac{b_2}{q}+1\right)\right\}\sigma_q-(x-1)\right]u(x)=0.
\notag
\end{align}
Equation \eqref{ND3} also has a fundamental system of solutions around infinity:
\begin{align}
\tilde{v}_1(x)&=\frac{\theta (a_1x)}{\theta (x)}{}_3\varphi_2\left(a_1,\frac{a_1q}{b_1},\frac{a_1q}{b_2};\frac{a_1q}{a_2},\frac{a_1q}{a_3};q,\frac{qb_1b_2}{a_1a_2a_3x}\right),\label{soli1}\\
\tilde{v}_2(x)&=\frac{\theta (a_2x)}{\theta (x)}{}_3\varphi_2\left(a_2,\frac{a_2q}{b_1},\frac{a_2q}{b_2};\frac{a_2q}{a_1},\frac{a_2q}{a_3};q,\frac{qb_1b_2}{a_1a_2a_3x}\right),\label{soli2}\\
\tilde{v}_3(x)&=\frac{\theta (a_3x)}{\theta (x)}{}_3\varphi_2\left(a_3,\frac{a_3q}{b_1},\frac{a_3q}{b_2};\frac{a_3q}{a_2},\frac{a_3q}{a_1};q,\frac{qb_1b_2}{a_1a_2a_3x}\right).\label{soli3}
\end{align}
The connection formula between the solutions \eqref{soli1}, \eqref{soli2}, \eqref{soli3} and \eqref{solo1} can be found in \cite[page 121]{GR}. We remark that the following formula was essentially given by L.~J.~Slater.

\begin{thm}[Slater, \cite{Slater}]\label{Slater}For any $x\in\mathbb{C}^*$, we have
\begin{align*}{}_3\varphi_2(a_1,a_2,a_3;b_1,b_2;q,x)&=
\frac{(a_2,a_3,b_1/a_1,b_2/a_1;q)_\infty}{(b_1,b_2,a_2/a_1,a_3/a_1;q)_\infty}
\frac{\theta (-a_1x)}{\theta (-x)}\frac{\theta (x)}{\theta (a_1x)}\tilde{v}_1\\
&+\idem (a_1;a_2,a_3).
\end{align*}
Provided that the notation $\idem (a_1;a_2,a_3)$ after an expression stands for the sum expressions obtained from the preceding expression by interchanging $a_1$ with each $a_2$ and $a_3$.
\end{thm}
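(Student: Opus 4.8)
The statement is the third‑order analogue of Watson's connection formula for ${}_2\varphi_1$, and I would attack it by the standard route for connection problems attached to a $q$‑difference equation: show that the two fundamental systems of \eqref{ND3} are linked by $q$‑elliptic coefficients, and then pin those coefficients down from their divisors together with one normalization. Because the left‑hand side ${}_3\varphi_2(a_1,a_2,a_3;b_1,b_2;q,x)$ is symmetric in $a_1,a_2,a_3$, it is enough to produce the coefficient of $\tilde v_1$; the $\idem(a_1;a_2,a_3)$ terms then come for free.

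First I would check that $\tilde v_1,\tilde v_2,\tilde v_3$ really form a fundamental system of \eqref{ND3} near $x=\infty$: each is annihilated by the operator in \eqref{ND3}, which is a direct substitution using $\theta(a_jqx)/\theta(qx)=a_j^{-1}\theta(a_jx)/\theta(x)$ and the contiguity relations of the inner ${}_3\varphi_2$ in the variable $qb_1b_2/(a_1a_2a_3x)$, and the three are independent over the field of $q$‑elliptic functions since along a spiral $x_0q^{-n}$ they grow like $a_j^{n}$ with pairwise distinct $a_j$ for generic parameters (equivalently, the Casoratian is a nonzero elliptic multiple of an explicit theta quotient). Hence, as meromorphic functions on $\mathbb{C}^*$,
\[{}_3\varphi_2(a_1,a_2,a_3;b_1,b_2;q,x)=C_1(x)\tilde v_1(x)+C_2(x)\tilde v_2(x)+C_3(x)\tilde v_3(x),\]
and Cramer's rule writes each $C_j$ as a ratio of a Casoratian‑type determinant built from ${}_3\varphi_2$ and the $\tilde v_i$ over the Casoratian of the $\tilde v_i$. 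Since ${}_3\varphi_2$ and all $\tilde v_i$ satisfy the \emph{same} equation \eqref{ND3}, numerator and denominator acquire the same automorphy factor under $x\mapsto qx$, so each $C_j$ is $q$‑invariant; being single‑valued on $\mathbb{C}^*$, it is an elliptic function.

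It then remains to identify $C_1$. Using that ${}_3\varphi_2(\cdots;q,x)$ and the $\tilde v_i$ are holomorphic and the $\tilde v_i$ nonvanishing for $|x|$ large, the poles of $C_1$ are confined to the spirals $q^{\mathbb{Z}}\cup[-1/a_1;q]$ (where the Casoratian vanishes) and its zeros to $[-1;q]\cup[1/a_1;q]$; matching this divisor with that of the theta function gives $C_1(x)=\kappa_1\,\theta(-a_1x)\theta(x)/\bigl(\theta(-x)\theta(a_1x)\bigr)$ for a constant $\kappa_1$, as the quotient of two elliptic functions with the same divisor is constant. Finally $\kappa_1$ is fixed by a normalization: multiplying by $\theta(a_1x)$ and letting $x$ approach $[-1/a_1;q]$ annihilates the $\tilde v_2,\tilde v_3$ contributions, and on the left one needs the residue there of the analytic continuation of ${}_3\varphi_2$. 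For this I would invoke the $q$‑analogue of Barnes' integral representation of ${}_3\varphi_2$ and displace its contour; in one stroke this supplies the continuation, exposes the poles, and yields the residue responsible for the factor $(a_2,a_3,b_1/a_1,b_2/a_1;q)_\infty/(b_1,b_2,a_2/a_1,a_3/a_1;q)_\infty$ — essentially Slater's computation. (An alternative is to specialize the parameters so that a series terminates and the identity collapses to the $q$‑Saalschütz summation, reading $\kappa_1$ off from there.) The genuinely non‑formal point is exactly this last step — justifying the contour shift, i.e. the decay of the $q$‑Barnes integrand and the residue bookkeeping, equivalently the emergence of the $q$‑shifted‑factorial prefactors; the $x$‑dependence of the connection coefficients is rigid once the elliptic‑function argument is in place.
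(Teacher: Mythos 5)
The paper itself does not prove this statement: Theorem~\ref{Slater} is quoted from Slater via \cite[p.~121]{GR}, where it is obtained in one stroke from the $q$-analogue of Barnes' contour integral by summing the residues on the two sides of the contour. Your scaffolding (meromorphic continuation of the left-hand side, Cramer's rule with the Casoratian, ellipticity of the coefficients $C_j$) is sound in spirit, but the two steps that are supposed to actually pin down $C_1$ both have genuine defects, so the argument does not close.

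First, the divisor step. You only argue that the poles and zeros of $C_1$ are \emph{confined to} certain spirals; an elliptic function is determined up to a constant by its \emph{exact} divisor, so containment is not enough, and nothing in your argument gives access to the zero set of the numerator determinant. The parenthetical justification is also off: the Casoratian of $(\tilde{v}_1,\tilde{v}_2,\tilde{v}_3)$ satisfies a first-order $q$-difference equation whose coefficient is the ratio of the trailing and leading coefficients of \eqref{ND3}, so its zero/pole divisor lies on the spirals through $1$ and $b_1b_2/(q^2a_1a_2a_3)$, not on $[-1/a_1;q]$; the pole of $C_1$ on $[-1/a_1;q]$ is there only to cancel the zero of the prefactor $\theta(a_1x)/\theta(x)$ in $\tilde{v}_1$. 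Second, and more seriously, the normalization step fails for exactly that reason: the product $C_1\tilde{v}_1=\kappa_1\,\frac{\theta(-a_1x)}{\theta(-x)}\,{}_3\varphi_2(\cdots)$ is \emph{regular} on $[-1/a_1;q]$, and so is the analytically continued left-hand side (its poles sit on $q^{-\mathbb{N}}$), so multiplying by $\theta(a_1x)$ and letting $x$ tend to $[-1/a_1;q]$ gives $0=0$ and fixes nothing. The only spiral carrying poles is $q^{\mathbb{Z}}$, and there all three terms of the right-hand side contribute simultaneously, so no one-point residue comparison isolates $\kappa_1$. In the end the constants have to come from the $q$-Barnes integral (or from a carefully executed terminating/$q$-Saalsch\"utz degeneration) — but once you shift that contour and do the residue bookkeeping you obtain the entire identity, $x$-dependence included, which is precisely Watson's and Slater's proof; the divisor-matching scaffolding then adds nothing and, as written, cannot substitute for it.
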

This Theorem can be considered as the higher order extension of Watson's formula. By Theorem \ref{Slater}, we obtain the following key Lemma.

\begin{lem}\label{ni}For any $x\in\mathbb{C}^*$, we have
\begin{align*}
&{}_3\varphi_2(a_1,a_2,a_3;b_1,0;q,x)\\
&=\frac{(a_2,a_3,b_1/a_1;q)_\infty}{(b_1,a_2/a_1,a_3/a_1;q)_\infty}
\frac{\theta (-a_1x)}{\theta (-x)}
{}_2\varphi_2\left(a_1,\frac{a_1q}{b_1};\frac{a_1q}{a_2},\frac{a_1q}{a_3};q,\frac{q^2b_1}{a_2a_3x}\right)\\
&+\idem (a_1;a_2,a_3).
\end{align*}
\end{lem}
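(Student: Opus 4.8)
\noindent The plan is to derive Lemma~\ref{ni} as the confluent limit $b_2\to0$ of Slater's connection formula (Theorem~\ref{Slater}), working throughout with generic $a_1,a_2,a_3,b_1$ so that every $q$-shifted factorial below is defined and nonzero. First I would put the summands of Theorem~\ref{Slater} into a form in which the limit is transparent: since $\dfrac{\theta(x)}{\theta(a_1x)}\tilde v_1(x)={}_3\varphi_2\!\left(a_1,\frac{a_1q}{b_1},\frac{a_1q}{b_2};\frac{a_1q}{a_2},\frac{a_1q}{a_3};q,\frac{qb_1b_2}{a_1a_2a_3x}\right)$ by the definition of $\tilde v_1$, the first summand equals
\[
\frac{(a_2,a_3,b_1/a_1;q)_\infty}{(b_1,a_2/a_1,a_3/a_1;q)_\infty}\,
\frac{(b_2/a_1;q)_\infty}{(b_2;q)_\infty}\,
\frac{\theta(-a_1x)}{\theta(-x)}\,
{}_3\varphi_2\!\left(a_1,\tfrac{a_1q}{b_1},\tfrac{a_1q}{b_2};\tfrac{a_1q}{a_2},\tfrac{a_1q}{a_3};q,\tfrac{qb_1b_2}{a_1a_2a_3x}\right),
\]
and likewise for the two terms coming from $\idem(a_1;a_2,a_3)$.

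The scalar prefactor is easy: $(b_2/a_1;q)_\infty\to1$ and $(b_2;q)_\infty\to1$ as $b_2\to0$, so $\dfrac{(b_2/a_1;q)_\infty}{(b_2;q)_\infty}\to1$, leaving exactly $\dfrac{(a_2,a_3,b_1/a_1;q)_\infty}{(b_1,a_2/a_1,a_3/a_1;q)_\infty}$, while the theta quotient does not involve $b_2$. The one genuine computation is inside the ${}_3\varphi_2$: grouping $(a_1q/b_2;q)_n$ with the factor $b_2^n$ extracted from $\left(\frac{qb_1b_2}{a_1a_2a_3x}\right)^n$ gives
\[
(a_1q/b_2;q)_n\,b_2^n=\prod_{j=1}^n\bigl(b_2-a_1q^j\bigr)\longrightarrow(-a_1)^n q^{\frac{n(n+1)}{2}}\qquad(b_2\to0),
\]
so, using $q^{n(n+1)/2}=q^{n(n-1)/2}\cdot q^n$ to move one power of $q$ into the argument, the $n$-th term of the ${}_3\varphi_2$ tends to
\[
\frac{(a_1;q)_n\,(a_1q/b_1;q)_n}{(a_1q/a_2;q)_n\,(a_1q/a_3;q)_n\,(q;q)_n}\,\bigl\{(-1)^n q^{\frac{n(n-1)}{2}}\bigr\}\left(\frac{q^2b_1}{a_2a_3x}\right)^n,
\]
which is precisely the $n$-th term of ${}_2\varphi_2\!\left(a_1,\frac{a_1q}{b_1};\frac{a_1q}{a_2},\frac{a_1q}{a_3};q,\frac{q^2b_1}{a_2a_3x}\right)$ (note $1+s-r=1$ for a ${}_2\varphi_2$). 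The same limit applies verbatim to the two $\idem(a_1;a_2,a_3)$ summands, and on the left-hand side the limit is trivial termwise, $(b_2;q)_n\to(0;q)_n=1$, recovering ${}_3\varphi_2(a_1,a_2,a_3;b_1,0;q,x)$.

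The only real obstacle is justifying the interchange of the limit $b_2\to0$ with the infinite sums. I would first fix $x$ with $|x|<1$: there the left-hand ${}_3\varphi_2$ is a genuinely convergent series and $|1/(b_2;q)_n|$ is bounded uniformly in $n$ for $b_2$ near $0$ (the partial products $(|b_2|;q)_n$ decrease to $(|b_2|;q)_\infty>0$), so dominated convergence applies on the left; on the right, since the limiting ${}_2\varphi_2$ is entire ($r-s=0<1$), a uniform bound on $\prod_{j=1}^n|b_2-a_1q^j|$ for $b_2$ in a sufficiently small (possibly $x$-dependent) punctured neighbourhood of $0$ lets the limit pass through the sum. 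Once the identity holds for $|x|<1$ it extends to all $x\in\mathbb{C}^*$ by analytic continuation, the theta quotients $\theta(-a_1x)/\theta(-x)$, $\theta(-a_2x)/\theta(-x)$, $\theta(-a_3x)/\theta(-x)$ furnishing the continuation exactly as in Theorem~\ref{Slater}.
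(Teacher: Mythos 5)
Your proposal is correct and takes the same route as the paper: the paper's proof is exactly the confluent limit $b_2\to 0$ in Theorem~\ref{Slater}, stated in one line. You simply supply the details the paper omits (the termwise limit $(a_1q/b_2;q)_n\,b_2^n\to(-a_1)^nq^{n(n+1)/2}$ producing the ${}_2\varphi_2$ with argument $q^2b_1/(a_2a_3x)$, and the justification of exchanging the limit with the sums), all of which check out.
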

\begin{proof} We tale the limit $b_2\to 0$ in Theorem \ref{Slater}, we obtain the conclusion.
\end{proof}
In the next section, we prove our new connection formula by Lemma \ref{ni} and the $q$-Borel-Laplace transformations.

\subsection{Proof of main Theorem}In this section, we prove the following Theorem.
\begin{thm}\label{main} For any $x\in\mathbb{C}^*\setminus [-\lambda ;q]$, we have
\begin{align*}
&{}_3f_1(a_1,a_2,a_3;b_1;q;\lambda ,x):=
\left(\mathcal{L}_{q,\lambda}^+\circ\mathcal{B}_q^+ {}_3\varphi_1(a_1,a_2,a_3;b_1;q,x)\right)(x)\\
&=\frac{(a_2,a_3,b_1/a_1;q)_\infty}{(b_1,a_2/a_1,a_3/a_1;q)_\infty}\frac{\theta (a_1\lambda )}{\theta (\lambda )}\frac{\theta (a_1qx/\lambda )}{\theta (qx/\lambda )}\frac{\theta (x)}{\theta (a_1x)}v_1(x)\\
&+\frac{(a_1,a_3,b_1/a_2;q)_\infty}{(b_1,a_1/a_2,a_3/a_2;q)_\infty}\frac{\theta (a_2\lambda )}{\theta (\lambda )}\frac{\theta (a_2qx/\lambda )}{\theta (qx/\lambda )}\frac{\theta (x)}{\theta (a_2x)}v_2(x)\\
&+\frac{(a_2,a_1,b_1/a_3;q)_\infty}{(b_1,a_2/a_3,a_1/a_3;q)_\infty}\frac{\theta (a_3\lambda )}{\theta (\lambda )}\frac{\theta (a_3qx/\lambda )}{\theta (qx/\lambda )}\frac{\theta (x)}{\theta (a_3x)}v_3(x).
\end{align*}
\end{thm}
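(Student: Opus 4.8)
The plan is to compute the $q$-Borel transform $\mathcal{B}_q^+$ of the divergent series ${}_3\varphi_1(a_1,a_2,a_3;b_1;q,x)$ explicitly and recognize it as a convergent ${}_3\varphi_2$-type series, then apply the $q$-Laplace transform $\mathcal{L}_{q,\lambda}^+$ termwise after inserting the connection formula of Lemma \ref{ni}. First I would observe that in the definition of ${}_3\varphi_1$ the factor $\{(-1)^nq^{n(n-1)/2}\}^{-1}$ is precisely what cancels against the $q^{n(n-1)/2}$ produced by $\mathcal{B}_q^+$: acting on \eqref{their}, the coefficient $a_n=\frac{(a_1,a_2,a_3;q)_n}{(b_1;q)_n(q;q)_n}(-1)^{-n}q^{-n(n-1)/2}$ becomes $a_n q^{n(n-1)/2}=\frac{(a_1,a_2,a_3;q)_n}{(b_1;q)_n(q;q)_n}(-1)^n$, so $\left(\mathcal{B}_q^+\, {}_3\varphi_1\right)(\xi)=\sum_{n\ge0}\frac{(a_1,a_2,a_3;q)_n}{(b_1;q)_n(q;q)_n}(-\xi)^n={}_3\varphi_2(a_1,a_2,a_3;b_1,0;q,-\xi)$, a convergent series since $r-s<1$. (One should double-check the sign conventions and whether it is $-\xi$ or $\xi$; this is the kind of routine bookkeeping I would verify carefully but not belabor.)

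Next I would substitute the connection formula of Lemma \ref{ni}, with $x$ replaced by $-\xi$, for this ${}_3\varphi_2(a_1,a_2,a_3;b_1,0;q,\cdot)$. This expresses $\left(\mathcal{B}_q^+\,{}_3\varphi_1\right)(\xi)$ as a sum of three terms, each of the shape $($constant in $\xi)\cdot\frac{\theta(a_j\xi)}{\theta(\xi)}\cdot{}_2\varphi_2\!\left(a_j,\frac{a_jq}{b_1};\frac{a_jq}{a_k},\frac{a_jq}{a_\ell};q,\frac{q^2 b_1}{a_ka_\ell\,\xi}\right)$ (up to the sign coming from $-\xi$, which I expect to be absorbable using $\theta(-x)$-versus-$\theta(x)$ identities and the evenness/inversion properties recorded in Section 2). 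Then I would apply $\mathcal{L}_{q,\lambda}^+$ to each term separately, using its series representation $\left(\mathcal{L}_{q,\lambda}^+\psi\right)(x)=\sum_{k\in\mathbb{Z}}\frac{\psi(\lambda q^k)}{\theta_q(\lambda q^k/x)}$. The key computational fact I would invoke is that $\mathcal{L}_{q,\lambda}^+$ turns the local factor $\frac{\theta(a_j\xi)}{\theta(\xi)}$ into $x^{-\alpha_j}$ times the elliptic prefactor $\frac{\theta(a_j\lambda)}{\theta(\lambda)}\frac{\theta(a_jqx/\lambda)}{\theta(qx/\lambda)}\frac{\theta(x)}{\theta(a_jx)}$ — this is where the new parameter $\lambda$ enters and where the $q$-difference equation $u(qx)=q^{\alpha_j}u(x)$ from Section 2 characterizes the answer — while it transforms the remaining ${}_2\varphi_2$ in the variable $\propto1/\xi$ into the ${}_3\varphi_2$ appearing in $v_j(x)$, \eqref{si1}--\eqref{si3}. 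Collecting the three pieces reproduces exactly the asserted right-hand side.

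The main obstacle is the middle step: justifying the termwise application of $\mathcal{L}_{q,\lambda}^+$ to the series obtained after substituting Lemma \ref{ni}, and carrying out the $q$-Laplace transform of each $\frac{\theta(a_j\xi)}{\theta(\xi)}\,{}_2\varphi_2(\cdots;q,c/\xi)$ block. This requires knowing $\mathcal{L}_{q,\lambda}^+$ on monomials $\xi^m$ for $m\in\mathbb{Z}$ — essentially $\mathcal{L}_{q,\lambda}^+(\xi^m)(x)$ is computable via the quasi-periodicity $\theta_q(q^kx)=q^{-k(k-1)/2}x^{-k}\theta_q(x)$ and amounts to a theta-quotient — and then resumming the resulting double series in the right order, which is legitimate because the ${}_2\varphi_2$ converges (its argument is a fixed multiple of $1/\xi$, and $\xi$ ranges over the discrete spiral $[\lambda;q]$ avoiding $0$). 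Managing the convergence/interchange, and tracking the signs and the $\theta(-x)/\theta(x)$ conversions so that the prefactors come out in the clean elliptic form displayed in the statement, is the delicate part; the rest is the bookkeeping already signposted by the structure of Lemma \ref{ni} and the explicit series for $v_1,v_2,v_3$. The restriction $x\in\mathbb{C}^*\setminus[-\lambda;q]$ is exactly the condition that all the $\theta_q(\lambda q^k/x)$ in the denominators are nonzero, so the $q$-Laplace integral is well defined.
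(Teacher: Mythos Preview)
Your approach is exactly the paper's: apply $\mathcal{B}_q^+$ to turn ${}_3\varphi_1$ into ${}_3\varphi_2(a_1,a_2,a_3;b_1,0;q,-\xi)$, invoke Lemma~\ref{ni}, then apply $\mathcal{L}_{q,\lambda}^+$ term by term; the paper's own proof is in fact a two-line sketch of precisely these three steps, so your write-up simply fills in the bookkeeping the paper omits. One small slip: for ${}_3\varphi_2$ we have $r-s=1$, not $r-s<1$, so the Borel transform has radius of convergence $1$ rather than $\infty$---still enough, since Lemma~\ref{ni} provides the analytic continuation needed before applying $\mathcal{L}_{q,\lambda}^+$.
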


\begin{proof}
We apply the $q$-Borel transformation to the series ${}_3\varphi_1(a_1,a_2,a_3;b_1;q,x)$.
\[\left(\mathcal{B}_q^+{}_3\varphi_1(a_1,a_2,a_3;b_1;q,x)\right)(\xi )={}_3\varphi_2(a_1,a_2,a_3;b_1,0,-\xi )=:\varphi (\xi ).\]
By Lemma \ref{ni}, we have another expression of the function $\varphi(\xi )$. We also apply the $q$-Laplace transformation $\mathcal{L}_{q,\lambda}$ to the function $\varphi (\xi )$, we obtain the conclusion. 
\end{proof}
\begin{rem}We remark that the fundamental system of solutions for equation \eqref{third} are given by 
\begin{align}
v_1(x)&:=\frac{\theta (a_1x)}{\theta (x)}{}_3\varphi_2\left(a_1,\frac{a_1q}{b_1},0;\frac{a_1q}{a_2},\frac{a_1q}{a_3};q,\frac{qb_1}{a_1a_2a_3x}\right),\\
v_2(x)&:=\frac{\theta (a_2x)}{\theta (x)}{}_3\varphi_2\left(a_2,\frac{a_2q}{b_1},0;\frac{a_2q}{a_1},\frac{a_2q}{a_3};q,\frac{qb_1}{a_1a_2a_3x}\right),\\
v_3(x)&:=\frac{\theta (a_3x)}{\theta (x)}{}_3\varphi_2\left(a_3,\frac{a_3q}{b_1},0;\frac{a_3q}{a_2},\frac{a_3q}{a_1};q,\frac{qb_1}{a_1a_2a_3x}\right)
\end{align}
in the Theorem \ref{main}.
\end{rem}
\begin{rem}By the $q$-difference equation of the theta function, we can check out that the connection coefficients (with the new parameter $\lambda$)
\begin{align*}
C_1(x)&:=\frac{(a_2,a_3,b_1/a_1;q)_\infty}{(b_1,a_2/a_1,a_3/a_1;q)_\infty}\frac{\theta (a_1\lambda )}{\theta (\lambda )}\frac{\theta (a_1qx/\lambda )}{\theta (qx/\lambda )}\frac{\theta (x)}{\theta (a_1x)},\\
C_2(x)&:=\frac{(a_1,a_3,b_1/a_2;q)_\infty}{(b_1,a_1/a_2,a_3/a_2;q)_\infty}\frac{\theta (a_2\lambda )}{\theta (\lambda )}\frac{\theta (a_2qx/\lambda )}{\theta (qx/\lambda )}\frac{\theta (x)}{\theta (a_2x)},\\
C_3(x)&:=\frac{(a_2,a_1,b_1/a_3;q)_\infty}{(b_1,a_2/a_3,a_1/a_3;q)_\infty}\frac{\theta (a_3\lambda )}{\theta (\lambda )}\frac{\theta (a_3qx/\lambda )}{\theta (qx/\lambda )}\frac{\theta (x)}{\theta (a_3x)}
\end{align*}
are the $q$-elliptic functions.
\end{rem}

\section{The limit $q\to 1-0$ of the connection formula}
The aim of this section is to give the limit $q\to 1-0$ of the new connection formula as follows:

\begin{thm}\label{limit}For any $x\in\mathbb{C}^*\setminus [-\lambda ;q]$, we have the following limit $q\to 1-0$ of the connection formula
\begin{align*}
&\lim_{q\to 1-0}{}_3f_1(q^{\alpha_1},q^{\alpha_2},q^{\alpha_3};q^{\beta_1};q;\lambda ,x)\\
&=\frac{\Gamma (\beta_1)\Gamma (\alpha_2-\alpha_1)\Gamma (\alpha_3-\alpha_1)}{\Gamma (\alpha_2)\Gamma (\alpha_3)\Gamma(\beta_1-\alpha_1)}
x^{-\alpha_1}
{}_2F_2\left(\alpha_1,\alpha_1+1-\beta_1;\alpha_1+1-\alpha_2,\alpha_1+1-\alpha_3;\frac{1}{x}\right)\\
&+\frac{\Gamma (\beta_1)\Gamma (\alpha_1-\alpha_2)\Gamma (\alpha_3-\alpha_2)}{\Gamma (\alpha_1)\Gamma (\alpha_3)\Gamma(\beta_1-\alpha_2)}
x^{-\alpha_2}
{}_2F_2\left(\alpha_2,\alpha_2+1-\beta_1;\alpha_2+1-\alpha_1,\alpha_2+1-\alpha_3;\frac{1}{x}\right)\\
&+\frac{\Gamma (\beta_1)\Gamma (\alpha_2-\alpha_3)\Gamma (\alpha_1-\alpha_3)}{\Gamma (\alpha_2)\Gamma (\alpha_1)\Gamma(\beta_1-\alpha_3)}
x^{-\alpha_3}
{}_2F_2\left(\alpha_3,\alpha_3+1-\beta_1;\alpha_3+1-\alpha_2,\alpha_3+1-\alpha_1;\frac{1}{x}\right),
\end{align*}
provided that $-\pi<\arg x<\pi$.
\end{thm}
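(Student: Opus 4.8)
The plan is to pass to the limit $q\to1-0$ directly in the connection formula of Theorem \ref{main} after the substitution $a_j=q^{\alpha_j}$ ($j=1,2,3$), $b_1=q^{\beta_1}$, writing its right-hand side as $\sum_{j=1}^{3}C_j(x)v_j(x)$ and treating the three factors of each summand in turn. For the $q$-shifted-factorial prefactor I would use $\lim_{q\to1-0}\Gamma_q(s)=\Gamma(s)$ in the form $(q^s;q)_\infty=(q;q)_\infty(1-q)^{1-s}/\Gamma_q(s)$; grouping the six factors into the pairs $(q^{\alpha_{j'}};q)_\infty/(q^{\alpha_{j'}-\alpha_j};q)_\infty$, $(q^{\alpha_{j''}};q)_\infty/(q^{\alpha_{j''}-\alpha_j};q)_\infty$ and $(q^{\beta_1-\alpha_j};q)_\infty/(q^{\beta_1};q)_\infty$ shows that $\frac{(a_{j'},a_{j''},b_1/a_j;q)_\infty}{(b_1,a_{j'}/a_j,a_{j''}/a_j;q)_\infty}=(1-q)^{-\alpha_j}\frac{\Gamma_q(\beta_1)\Gamma_q(\alpha_{j'}-\alpha_j)\Gamma_q(\alpha_{j''}-\alpha_j)}{\Gamma_q(\alpha_{j'})\Gamma_q(\alpha_{j''})\Gamma_q(\beta_1-\alpha_j)}$, which tends to $(1-q)^{-\alpha_j}$ times the Gamma quotient of the statement. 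For the elliptic factor I would use the limit $\lim_{q\to1-0}\theta_q(q^{\mu}y)/\theta_q(y)=y^{-\mu}$, valid in the cut plane $-\pi<\arg y<\pi$ (this is exactly where the hypothesis $-\pi<\arg x<\pi$ enters), together with $\theta_q(qy)=y^{-1}\theta_q(y)$ and the inversion $\theta_q(1/y)=\theta_q(y)/y$; after cancellation $\frac{\theta(a_j\lambda)}{\theta(\lambda)}\frac{\theta(a_jqx/\lambda)}{\theta(qx/\lambda)}\to x^{-\alpha_j}$, so in particular the auxiliary direction $\lambda$ disappears in the limit (as it must, since $\lambda$ is absent from the right-hand side of Theorem \ref{limit}). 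Thus the ``coefficient $\times$ elliptic'' part of the $j$-th summand tends to $(1-q)^{-\alpha_j}\,x^{-\alpha_j}\,(\text{Gamma quotient})$, and what remains is to identify the limit of the $j$-th basic hypergeometric factor together with the compensating $(1-q)^{\alpha_j}$.

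The delicate point is exactly this last factor. Because the ${}_3\varphi_2\!\left(a_j,\tfrac{a_jq}{b_1},0;\tfrac{a_jq}{a_{j'}},\tfrac{a_jq}{a_{j''}};q,\tfrac{qb_1}{a_1a_2a_3x}\right)$ occurring in $v_j$ has a vanishing numerator parameter, its term of order $n$ carries $(1-q)^{-n}$ while its argument tends to $1/x\neq0$; hence $v_j$ does not converge termwise as $q\to1-0$, each summand $C_j(x)v_j(x)$ blows up, and it is only their sum --- which is ${}_3f_1=\mathcal L^{+}_{q,\lambda}\circ\mathcal B^{+}_q({}_3\varphi_1)$ --- that has a finite limit. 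I would handle this by returning to the construction in the proof of Theorem \ref{main}: $\mathcal B^{+}_q({}_3\varphi_1)(\xi)={}_3\varphi_2(a_1,a_2,a_3;b_1,0;q,-\xi)$ and, by Lemma \ref{ni}, this is a sum of the three entire functions $\xi\mapsto\frac{\theta(a_j\xi)}{\theta(\xi)}{}_2\varphi_2\!\left(a_j,\tfrac{a_jq}{b_1};\tfrac{a_jq}{a_{j'}},\tfrac{a_jq}{a_{j''}};q,\cdot/\xi\right)$, so each $C_j(x)v_j(x)$ is the $q$-Laplace transform of one of them. I would then invoke the confluence of the $q$-Borel--Laplace summation to the classical Borel--Laplace summation as $q\to1-0$ (Zhang \cite{Z0}, Di Vizio--Zhang \cite{ZandD}): in the normalization that absorbs the factors $(1-q)^{\pm n}$, and hence the surviving $(1-q)^{-\alpha_j}$, the quantity $\lim_{q\to1-0}{}_3f_1(q^{\alpha_1},q^{\alpha_2},q^{\alpha_3};q^{\beta_1};q;\lambda,x)$ equals the Borel--Laplace sum, in the direction $\arg\lambda$, of the divergent series to which ${}_3\varphi_1$ degenerates, namely ${}_3F_1(\alpha_1,\alpha_2,\alpha_3;\beta_1;-x)$. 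Its Borel transform is, up to normalization, ${}_3F_2(\alpha_1,\alpha_2,\alpha_3;\beta_1,1;-\zeta)$, analytic on $|\zeta|<1$; substituting the classical three-branch connection formula of ${}_3F_2(\alpha_1,\alpha_2,\alpha_3;\beta_1,1;-\zeta)$ near $\zeta=\infty$ (the $q\to1$ shadow of Lemma \ref{ni}, i.e.\ of Theorem \ref{Slater}) into the Laplace integral $\tfrac1x\int_0^{\infty e^{i\arg\lambda}}(\cdot)\,e^{-\zeta/x}\,d\zeta$ and integrating term by term via $\int_0^{\infty}\zeta^{s-1}e^{-\zeta/x}\,d\zeta=\Gamma(s)x^{s}$ reproduces, branch by branch, exactly the $\Gamma$-weighted $x^{-\alpha_j}\,{}_2F_2\!\left(\alpha_j,\alpha_j+1-\beta_1;\alpha_j+1-\alpha_{j'},\alpha_j+1-\alpha_{j''};\tfrac1x\right)$ of the statement. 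Summing over $j$ gives the claim.

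The main obstacle is precisely this middle step: making rigorous the commutation of $\lim_{q\to1-0}$ with the ($q$-)resummation, equivalently the exact cancellation of the divergent parts of the three summands $C_j(x)v_j(x)$ --- this cancellation is the $q=1$ reflection of the Stokes phenomenon and is the whole content of the theorem. Two routes are available: invoke the confluence theorem for $q$-Borel--Laplace sums and verify its hypotheses for the $q$-Gevrey-one series ${}_3\varphi_1$; or work directly from the Jackson-integral form $\mathcal L^{+}_{q,\lambda}\varphi(x)=\tfrac1{1-q}\int_0^{\lambda\infty}\varphi(\xi)\,\theta_q(\xi/x)^{-1}\,d_q\xi/\xi$ with $\varphi=\mathcal B^{+}_q({}_3\varphi_1)$, using the known $q\to1-0$ asymptotics of the theta kernel $(1-q)^{-1}\theta_q(\xi/x)^{-1}$ and of $\varphi$ on the $[\lambda;q]$-spiral. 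The remaining classical evaluation near $\zeta=\infty$ is routine, being nothing but the classical connection theory of the generalized hypergeometric equation with an irregular singular point.
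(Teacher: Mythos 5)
There is a genuine gap: the step you yourself flag as ``the main obstacle'' --- commuting $\lim_{q\to1-0}$ with the ($q$-)resummation, i.e.\ the exact cancellation of the divergent parts of the three summands $C_j(x)v_j(x)$ --- is never actually carried out. You only name two possible routes (invoke a confluence theorem of Zhang/Di Vizio--Zhang and ``verify its hypotheses'', or estimate the Jackson integral of the theta kernel along the $[\lambda;q]$-spiral), without executing either; the subsequent classical Borel--Laplace computation with the ${}_3F_2$ connection formula at $\zeta=\infty$ is conditional on that unproved step. Moreover the unspecified ``normalization that absorbs the factors $(1-q)^{\pm n}$'', which is supposed to dispose of the leftover $(1-q)^{-\alpha_j}$ from your $\Gamma_q$ rewriting, is exactly the point that needs a precise statement: as written, each summand of your decomposition diverges like $(1-q)^{-\alpha_j}$ times a ${}_3\varphi_2$ whose terms grow like $(1-q)^{-n}$, and no finite limit is extracted.

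The paper avoids all of this by one elementary device you missed: the substitution $x\mapsto x/(1-q)$ at the outset (this rescaling is how the limit in Theorem \ref{limit} is meant). After it, the argument of each ${}_3\varphi_2$ in $v_j$ becomes $\frac{q^{1+\beta_1}(1-q)}{q^{\alpha_1+\alpha_2+\alpha_3}x}$, so the extra factor $(1-q)^n$ offsets the $(1-q)^{-n}$ growth of the coefficients and each series converges \emph{termwise} to ${}_2F_2\bigl(\alpha_j,\alpha_j+1-\beta_1;\alpha_j+1-\alpha_{j'},\alpha_j+1-\alpha_{j''};\tfrac1x\bigr)$; simultaneously the rescaled theta quotient, grouped with $(1-q)^{-\alpha_j}$, tends to $(x/\lambda)^{-\alpha_j}$ by \eqref{limt2}, while $\theta(q^{\alpha_j}\lambda)/\theta(\lambda)\to\lambda^{-\alpha_j}$ by \eqref{limt1} and the $q$-shifted factorials give the Gamma quotient by \eqref{limgamma}. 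Thus each of the three summands has a finite limit on its own --- no cancellation between summands, no confluence theorem for $q$-Borel--Laplace sums, and no classical connection theory of ${}_3F_2$ is needed. Your blow-up diagnosis is an artifact of taking the limit at fixed $x$; with the paper's rescaling the whole proof is a routine term-by-term passage to the limit. If you want to salvage your route (fixed $x$, genuine Stokes-type cancellation), you would have to prove the confluence statement yourself, which is a substantially harder and different theorem than the one in the paper.
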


The following proposition \cite{Z1} is important to consider the limit $q\to 1-0$ of our connection formula.
\begin{prp} For any $x\in\mathbb{C}^* (-\pi <\arg x<\pi )$, we have
\begin{equation}
\lim_{q\to 1-0}\frac{\theta (q^\beta x)}{\theta (q^\alpha x)}=x^{\alpha -\beta}\label{limt1}
\end{equation}
and 
\begin{equation}
\lim_{q\to 1-0}\frac{\theta \left(\dfrac{q^\alpha x}{(1-q)}\right)}{\theta \left(\dfrac{q^\beta x}{(1-q)}\right)}(1-q)^{\beta -\alpha}=x^{\beta -\alpha}.\label{limt2}
\end{equation}
\end{prp}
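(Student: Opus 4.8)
The plan is to prove Theorem~\ref{limit} by taking the $q\to 1-0$ limit of the connection formula in Theorem~\ref{main} term by term, using the Proposition \eqref{limt1}--\eqref{limt2} (which I now treat as an available tool) to handle the theta-quotients, together with the classical $q$-analogue limits for $q$-shifted factorials and basic hypergeometric series. First I would set $a_j=q^{\alpha_j}$ and $b_1=q^{\beta_1}$ throughout, so that each ingredient of the three-term formula becomes a function of $q$ whose limit I can analyze separately. The overall strategy is to show that each of the three summands converges to the corresponding summand on the right-hand side of Theorem~\ref{limit}, and I would treat the first summand in full detail, the other two following by the symmetry $a_1\leftrightarrow a_2, a_3$.

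The core of the argument splits into three independent limits for the first term. The first is the constant prefactor: I would apply the standard limit $\lim_{q\to 1-0}\frac{(q^a;q)_\infty}{(q^b;q)_\infty}=\frac{\Gamma(b)}{\Gamma(a)}$ (equivalently the $q$-Gamma function limit $\Gamma_q(\alpha)\to\Gamma(\alpha)$) to the ratio $\frac{(a_2,a_3,b_1/a_1;q)_\infty}{(b_1,a_2/a_1,a_3/a_1;q)_\infty}$, which should produce the Gamma-factor $\frac{\Gamma(\beta_1)\Gamma(\alpha_2-\alpha_1)\Gamma(\alpha_3-\alpha_1)}{\Gamma(\alpha_2)\Gamma(\alpha_3)\Gamma(\beta_1-\alpha_1)}$. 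The second is the theta-quotient prefactor $\frac{\theta(a_1\lambda)}{\theta(\lambda)}\frac{\theta(a_1qx/\lambda)}{\theta(qx/\lambda)}\frac{\theta(x)}{\theta(a_1x)}$: here I would invoke \eqref{limt1} to each of the three ratios and observe that the $\lambda$-dependent factors cancel in the limit, leaving precisely the power $x^{-\alpha_1}$ that appears in front of the hypergeometric series. The third limit is for the series factor $v_1(x)$ itself: since $v_1(x)$ (in its entire form after stripping the theta-quotient) is built from ${}_3\varphi_2\left(a_1,\frac{a_1q}{b_1},0;\frac{a_1q}{a_2},\frac{a_1q}{a_3};q,\frac{qb_1}{a_1a_2a_3x}\right)$, I would use the termwise limit $\lim_{q\to 1-0}\frac{(q^a;q)_n}{(q;q)_n}(1-q)^{\cdots}=\cdots$ converting each $q$-Pochhammer into an ordinary Pochhammer, so that the basic hypergeometric series tends to the confluent hypergeometric-type series ${}_2F_2\left(\alpha_1,\alpha_1+1-\beta_1;\alpha_1+1-\alpha_2,\alpha_1+1-\alpha_3;1/x\right)$; the vanishing upper parameter $0=q^{\infty}$ together with the argument $\frac{qb_1}{a_1a_2a_3x}\to 1/x$ is exactly what degenerates a ${}_3\varphi_2$ into a ${}_2F_2$ in the confluent limit.

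I would then assemble these three limits and check that their product reproduces the first summand of Theorem~\ref{limit} verbatim, after which the $\idem(a_1;a_2,a_3)$ symmetry gives the remaining two summands, completing the proof. The restriction $-\pi<\arg x<\pi$ is inherited directly from the hypothesis of the Proposition \eqref{limt1}, so I would carry that assumption from the outset.

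The main obstacle I anticipate is the delicate bookkeeping of the powers of $(1-q)$. The argument $\frac{qb_1}{a_1a_2a_3x}$ does not by itself carry the $(1-q)$-rescaling needed to confluence, so the convergence of the series factor must be handled by matching the scaling of the numerator and denominator $q$-Pochhammer symbols in each term of the series, which is where \eqref{limt2}---the scaled theta-limit with its explicit $(1-q)^{\beta-\alpha}$ factor---becomes essential rather than the naive \eqref{limt1}. Ensuring that all these powers of $(1-q)$ cancel globally (so that the final limit is finite and independent of $\lambda$), and justifying the interchange of the $q\to 1-0$ limit with the infinite summation defining the hypergeometric series (which requires a uniform-convergence or dominated-convergence estimate on the series terms as $q\to 1-0$), will be the technically demanding part of the proof.
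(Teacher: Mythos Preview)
Your proposal does not address the statement you were asked to prove. The statement in question is the Proposition giving the two theta-quotient limits \eqref{limt1}--\eqref{limt2}, and the paper does not prove it at all: it is quoted from Zhang \cite{Z1} as a known input. There is therefore no ``paper's own proof'' of this Proposition to compare against; whatever argument you give for it would have to come from the cited reference, not from this paper.

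What you have written instead is a proof plan for Theorem~\ref{limit}, explicitly \emph{using} the Proposition as a black-box tool. As a plan for Theorem~\ref{limit} your outline is close in spirit to the paper's short proof, but it omits the one step that makes everything work: the paper first performs the substitution $x\mapsto x/(1-q)$. This rescaling is precisely what places the $x$-dependent theta quotient in the form required by \eqref{limt2} (not \eqref{limt1}, as you first suggest for all three ratios) and simultaneously inserts the factor $(1-q)$ into the argument $\frac{qb_1}{a_1a_2a_3x}$ of the ${}_3\varphi_2$, so that the termwise limit yields ${}_2F_2(\dots;1/x)$. Without this substitution the powers of $(1-q)$ do not balance---which is exactly the obstacle you correctly anticipate in your final paragraph but do not resolve. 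The paper resolves it in one line by making the substitution at the outset, after which the $(1-q)^{-\alpha_1}$ factor absorbed into the $\Gamma_q$ rewriting pairs with the scaled theta quotient and \eqref{limt2} applies directly.
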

\noindent
We also review the $q$-gamma function. The $q$-gamma function $\Gamma_q(x)$ is
\[\Gamma_q(x):=\frac{(q;q)_\infty}{(q^x;q)_\infty}(1-q)^{1-x},\qquad 0<q<1.\]
The limit $q\to 1-0$ of $\Gamma_q(x)$ gives the gamma gunction \cite[page 20]{GR}
\begin{equation}
\lim_{q\to 1-0}\Gamma_q(x)=\Gamma (x).\label{limgamma}
\end{equation}

We give the proof of the Theorem \ref{limit}.
\begin{proof}
At first, we put $a_j:=q^{\alpha_j} $ ($j=1,2,3$), $b_1:=q^{\beta_1}$ and $x\mapsto x/(1-q)$. 
We remark that the limit $q\to 1-0$ of the left hand-side of Theorem \ref{limit} formally converges the hypergeometric series 
\[{}_3F_1(\alpha_1,\alpha_2,\alpha_3;\beta_1;x)=\sum_{n\ge 0}\frac{(\alpha_1,\alpha_2,\alpha_3)_n}{(\beta_1)_n n!}x^n.\]
We consider the right hand-side. The connection formula can be rewritten as follows:
\begin{align*}
&{}_3f_1(q^{\alpha_1},q^{\alpha_2},q^{\alpha_3};q^{\beta_1};q;\lambda ,x)\\
&=\frac{(q^{\alpha_2},q^{\alpha_3},q^{\beta_1-\alpha_1};q)_\infty}{(q^{\beta_1},q^{\alpha_2-\alpha_1},q^{\alpha_3-\alpha_1};q)_\infty}\frac{\theta (q^{\alpha_1}\lambda )}{\theta (\lambda )}\frac{\theta \left(\frac{q^{\alpha_1+1}x}{\lambda (1-q)}\right)}{\theta \left(\frac{qx}{\lambda (1-q)} \right)}\\
&\times{}_3\varphi_2\left(q^{\alpha_1},q^{\alpha_1+1-\beta_1},0;q^{\alpha_1+1-\alpha_2},q^{\alpha_1+1-\alpha_3};q,\frac{q^{1+\beta_1}(1-q)}{q^{\alpha_1+\alpha_2+\alpha_3}x}\right)\\
&+\idem (q^{\alpha_1}; q^{\alpha_2},q^{\alpha_3})\\
&=\frac{\Gamma_q(\beta_1)\Gamma_q(\alpha_2-\alpha_1)\Gamma_q(\alpha_3-\alpha_1)}{\Gamma_q(\alpha_2)\Gamma_q(\alpha_3)\Gamma_q(\beta_1-\alpha_1)}
\frac{\theta (q^{\alpha_1}\lambda )}{\theta (\lambda )}
\left\{\frac{\theta \left(\frac{q^{\alpha_1+1}x}{\lambda (1-q)}\right)}{\theta \left(\frac{qx}{\lambda (1-q)} \right)}(1-q)^{-\alpha_1}\right\}\\
&\times{}_3\varphi_2\left(q^{\alpha_1},q^{\alpha_1+1-\beta_1},0;q^{\alpha_1+1-\alpha_2},q^{\alpha_1+1-\alpha_3};q,\frac{q^{1+\beta_1}(1-q)}{q^{\alpha_1+\alpha_2+\alpha_3}x}\right)\\
&+\idem (q^{\alpha_1}; q^{\alpha_2},q^{\alpha_3}).
\end{align*}
By \eqref{limt1}, \eqref{limt2} and \eqref{limgamma}, we obtain the conclusion. 
\end{proof}

\section*{Acknowledgements}
The author would like to give heartful thanks to Professor Yousuke Ohyama who provided carefully considered feedback and many valuable comments. The author also would like to show his greatest appreciation to Professor Masahiko Ito who provided helpful comments and suggestions.

\end{document}